\newcommand{\chm}[1]{{\textcolor{black}{#1}}}

\documentclass[american,english]{article}
\usepackage[T1]{fontenc}
\usepackage[latin9]{inputenc}
\usepackage{geometry}
\geometry{verbose,tmargin=1in,bmargin=1in,lmargin=1in,rmargin=1in,headheight=1in,headsep=1in,footskip=1in}
\usepackage{amsmath}
\usepackage{amsthm}
\usepackage{amssymb}
\usepackage{siunitx}

\usepackage{fullpage}
\usepackage{amssymb}
\usepackage{amsmath}
\newtheorem{remark}{Remark}
\newtheorem{proposition}{Proposition}
\usepackage{tikz}
	\tikzset{
    	every picture/.append style={
		    line join = round,
		    line cap = round,
		},
	}
    \usetikzlibrary{%
        decorations.pathreplacing,%
        decorations.pathmorphing,%
        arrows,%
    }
    \tikzset{
        interface/.style={
            postaction={
                draw,
                thin,
                decorate,
                decoration={
                    border,
                    angle=-45,
                    amplitude=0.3cm,
                    segment length=2mm,
                },
            },
        },
        spring/.style={
            decorate,
            decoration={
                coil,
                aspect=0.3,
                segment length=1ex,
                amplitude=1ex,
                pre length=.5em,
                post length=.5em,
            },
        },
    }
	\usepackage{pgfplots}
		\pgfplotsset{compat=1.13}  
		\usepgfplotslibrary{units}
		\usepgfplotslibrary{colorbrewer}
		\pgfplotsset{
    		every axis/.append style={
    			scale only axis,
    			width = \textwidth - 4cm,
    			height = 3.5cm,
			},
		}

\usepackage{siunitx}
\usepackage{cool}
\usepackage{interval}
\usepackage{nicefrac}
\usepackage{varwidth}
\usepackage{linegoal}
\usepackage{calc}
	\setlength{\parskip}{1ex}
\usepackage[blocks]{authblk}
\usepackage{enumitem}
	\setlist[itemize]{noitemsep, topsep=0pt}

	\usepackage{standalone}
	\standaloneconfig{mode=image|tex}

\usepackage{pgfplotstable}
	\usepackage{booktabs}
\usepackage{gitinfo}
\usepackage{tikz,pgfplots}
\usepackage{pgfplotstable}

\title{Mathematical formulation of a dynamical system with dry friction subjected to external forces}

\author{A. Bensoussan$^1$, A. Brouste$^2$, F. B. Cartiaux$^3$, C. Mathey$^4$ and L. Mertz$^5$}
\affil{
$^1$Jindal School of Management, University of Texas at Dallas, Richardson, USA; School of Data Science, Hong-Kong City University, Hong Kong,\\
$^2$Laboratoire Manceau de Math\'ematiques, Le Mans Universit\'e, Le Mans, France,\\
$^3$OSMOS Group,\\
$^4$Department of Mathematics, City University of Hong Kong, Kowloon Tong, Hong Kong,\\
$^5$ECNU-NYU, Institute of Mathematical Sciences, NYU Shanghai, Shanghai, China}
\begin{document}
\maketitle
\begin{abstract}
We consider the response of a one-dimensional system with  friction.
S.W. Shaw (Journal of Sound and Vibration, 1986) introduced the set up of different coefficients for the static and dynamic phases (also called stick and slip phases). He constructs a step by step solution, corresponding to an harmonic forcing. 
In this paper, we show that the theory of variational inequalities (V.I.) provides an elegant and synthetic approach to obtain the existence and uniqueness of the solution, avoiding the step by step construction. We then apply the theory to a real structure with real data  and show that the model is quite accurate. In our case, the forcing motion comes from dilatation, due to temperature. 
\end{abstract}

\section{Introduction}
The study of one dimensional models of \textit{dry friction} (also called \textit{Coulomb friction}) has received a significant interest over the last decades \cite{DENHAR1930,MR0852197,PFD1985,YEH1966,LCW1982,LCW1983}.
A typical  example is  a solid lying on a motionless surface.
In response to forces, the dynamics of the solid has two separate phases.
One is \textit{dynamic} (also called slip) in which the solid moves and a dynamic friction force opposes the motion. 
The other phase is \textit{static} (also called stick) in which the solid remains motionless while being subjected to a static friction force that is necessary for equilibrium. The physics of dry friction is presented in \cite{POP2010}. The static phase plays an important role on the prediction of the system behavior (e.g. failure) and thus it has to be carefully analyzed.

\noindent The case when external forces are harmonic has been investigated with particular attention. 
Two  classes of techniques have been developed in the literature.  
In dimension one or two, \textit{exact methods} have been considered. 
Den Hartog \cite{DENHAR1930} has proposed  an exact solution of the dynamic phase for a single degree of freedom system where the coefficients of static and dynamic friction are equal. Shaw \cite{MR0852197} extended Den Hartog's results with a different approach to the case where the coefficients of static and dynamic friction are not identical and provided a stability analysis of the periodic motion. Yeh \cite{YEH1966} extended the method to two degrees of freedom with one dry friction damper. 
In higher dimensions, an approximation method called  \textit{the incremental harmonic balance (IHB) method}, has been developed by Lau, Cheung \& Wu \cite{LCW1982,LCW1983} and Pierre, Ferri \& Dowel \cite{PFD1985}. This method is efficient   when dealing with many degrees of freedom and many dry friction dampers. The IHB method does provide good results on amplitude and phase for a broad class of stick/slip motions but it does not give detailed information about stick/slip phases.

\noindent 
In this paper, we study a one-dimensional problem, with general forcing term.
Denoting the actual displacement of the oscillator by $x$, we consider the initial value problem
\begin{equation}
\label{eq:friction1}
m \ddot x(t) + \mathbb{F}(\dot x(t)) = b(x(t),t), \: t >0
\end{equation}
with $m>0$ is a constant, the initial displacement and velocity 
\begin{equation}
\label{eq:friction2}
x(0) = x_0, \quad \dot x (0) = 0.
\end{equation}
In the right hand side of Equation \eqref{eq:friction1}, $b(x(t),t)$ represents  the  forces that are not related to dry friction; here $b$ is a locally Lipschitz function with at most linear growth. 
In our application, we will take 
\begin{equation}
\label{eq:engapp}
b(x,t) \triangleq K (\beta T(t) - x)
\end{equation}
with $K, \beta$  constants and $T$ is a temporal function which describes the evolution of the temperature.
The function $\mathbb{F}(\dot x(t))$ is not easy to describe. In a static phase $\dot x(t)=0$ on an interval, which implies $\ddot x(t)=0$ on the same interval and therefore from \eqref{eq:friction1} 
\begin{equation}
\label{eq:friction200}
\mathbb{F}(\dot x(t)) = b(x(t),t)
\end{equation}
However,  this is possible only when $|b(x(t),t)| \leq f_s$,  where $f_s$ is the static friction coefficient.
In a  dynamic phase $\dot x(t)\neq 0$ on an interval, although it can vanish at isolated points. We cannot simply use equation \eqref{eq:friction1} to obtain $\mathbb{F}(\dot x(t))$. We need an additional information, provided by Coulomb's law. We write, formally 
\begin{equation}
\label{eq:friction201}
\mathbb{F}(\dot x(t)) = f_d\mbox{sign}(\dot x(t)).
\end{equation}
Since we expect $\dot x(t)$  to vanish only at isolated points, $\mbox{sign}(\dot x(t))$  is defined  almost everywhere, and therefore, we can use this expression in the second order differential equation \eqref{eq:friction1}, with an equality valid a.e. instead of for any $t$. The difficulty is that we cannot write the equation a priori. The step by step construction of the solution is  a way to get out of this chicken and egg effect. A more elegant way is to use variational inequalities (V.I.). A V.I. is the following mathematical problem 
\begin{equation}
\label{eq:vi}
\tag{$\mathcal{VI}$}
\forall t > 0,
\: \forall \varphi \in \mathbb{R}, 
\: (b(x(t),t) - m \ddot x (t)) (\varphi - \dot x(t)) + f_d |\dot x(t)| \leq f_d |\varphi|.
\end{equation}
If we apply the  V.I  in   a static phase, when $\dot x(t)=0$ on an interval, we get immediately that $|b(x(t),t)|\leq f_d$.  This cannot apply to  the case considered by Shaw in which in a static phase $|b(x(t),t)|\leq f_s$  with a coefficient $f_s>f_d$. We will see how to solve this difficulty in the next section.

\begin{remark}
We can change  $b(x(t),t)$  into $b(x(t),\dot x(t),t)$ in the right hand side of Equation \eqref{eq:friction1},  provided the dependence in $\dot x(t)$   is smooth. For instance, we have in mind forces of the form $b(x(t),\dot x(t),t)=b(x(t),t) - \alpha \dot x(t)$. To simplify we shall omit this situation. 
 
\end{remark}
\noindent  In this paper, we propose a two-phase model for Equations \eqref{eq:friction1}-\eqref{eq:friction2} with $f_d \leq f_s $ and obtain  a  solution which is $C^1$.   The two phases are called static and dynamic phases. The dynamic phase is captured by a V.I. In the case $f_d = f_s$, the VI   captures both phases.

\section{Model description and mathematical theory}
\subsection{Description of dry friction: static and dynamic phases}
In presence of dry friction, the physical description of the dynamic and static phases is as follows. 
\begin{itemize}
\item
The phase of $x(t)$ at time $t$ is \textbf{static} when 
$$
\dot x (t) = 0 \: \mbox{and} \: |b(x(t),t) | \leq f_s.
$$
It is important to emphasize that a static phase corresponds to $\dot x(t) = 0$ on an time interval of positive Lebesgue measure and thus $\ddot x(t) = 0$ on the same interval. In this case, the friction force takes the value
$$
\mathbb{F}(\dot x(t)) = b(x(t),t)
$$
that is necessary for equilibrium.
\item
Otherwise, the phase of $x(t)$ at time $t$ is \textbf{dynamic} when
$$
\dot x (t) = 0 \: \mbox{and} \: |b(x(t),t) | > f_s 
\: \mbox{or} \:
\dot x (t) \neq 0.
$$
When $\dot x (t) \neq 0$, the friction force takes the value
$$
\mathbb{F}(\dot x(t)) =  \textup{sign} (\dot x(t)) f_d.
$$
It is important to emphasize that $\dot x (t) = 0$ \mbox{and} $|b(x(t),t) | > f_s$ happen on a negligible (isolated points) time set. 
\end{itemize}
The transition from a static phase to a dynamic phase occurs as soon as 
$$
|b(x(t),t)| > f_s.
$$ 
Conversely, from a dynamic phase the system enters into a static phase as soon as 
$$
\dot x (t) = 0 \: \mbox{and} \: |b(x(t),t)| \leq f_s.
$$ 
Below, we make precise the mathematical formulation and obtain a  solution which  is $C^1$. 

\subsection{Mathematical theory: a two  phase  model in which the dynamic phase is modeled by a variational inequality}
We want to define rigorously a function $x(t)$, which is $C^1$ 
and satisfies \eqref{eq:friction1}-\eqref{eq:friction2}. To fix ideas, we consider initial conditions 
$\tau_0 = 0$, $x(\tau_0) = x_0$, $\dot x(\tau_0) = 0$ in a static phase with
$$
|b(x_0,\tau_0)| \leq f_s.
$$ 
The first transition towards a dynamic phase occurs at time $\tau_{\frac{1}{2}}$ defined as 
$$
\tau_{\frac{1}{2}} 
\triangleq 
\inf \left \{ t \geq \tau_0, \: |b(x_0,t)| > f_s \right \} 
\footnote{The function $t \to b(x_0,t)$ being continuous, it is worth mentioning that $|b(x_0,\tau_{\frac{1}{2}})| = f_s$ \underline{but} whenever $\epsilon>0$ is appropriately small then $|b(x_0,\tau_{\frac{1}{2}} + \epsilon)| > f_s$.}
$$
If $\tau_0 < \tau_ {\frac {1} {2}}$ then on the interval $[\tau_0, \tau_{\frac{1}{2}})$ we define
$$
x(t) \triangleq x_0, \: \dot x (t) \triangleq 0.
$$
If $|b (x_0, t) |$ never goes beyond $f_s$ then $\tau _ {\frac {1} {2}} = \infty $ and $x$ remains forever in a static phase.
If $\tau_0 = \tau_ {\frac {1} {2}}$ there is no static (or stick) phase and thus the interval $[\tau_0, \tau_ {\frac {1} {2}})$ is void.
Provided that $\tau _ {\frac {1} {2}}$ is finite, we can define the time of return in static phase $\tau_1$ as 
$$
\tau_1 
\triangleq 
\inf \left \{ t \: >  \: \tau_{\frac{1}{2}} , \: \dot x (t) = 0 \: \mbox{and} \:  |b(x(t),t)| \leq f_s \right \}
\footnote{The strict inequality in the time definition is crucial. We cannot use $t \geq \tau_{\frac{1}{2}}$, otherwise one has $\tau_1 = \tau_{\frac{1}{2}}$.}
$$
where
\begin{equation}
\label{eq:dynavi}
\forall t > \tau_{\frac{1}{2}},
\: \forall \varphi \in \mathbb{R}, 
\: (b(x(t),t) - m \ddot x (t)) (\varphi - \dot x(t)) + f_d |\dot x(t)| \leq f_d |\varphi|
\end{equation}
with the initial condition
$$
x(\tau_{\frac{1}{2}}) = x_0, 
\: \quad 
\dot x(\tau_{\frac{1}{2}}) = 0. 
$$
This class of VI is standard and the theory tells that such a problem has one and only one solution in $C^1$ on the interval 
$[\tau _ {\frac {1} {2}},\infty)$. See for instance \cite{MR0348562} for a proof of this general result. Actually, we will see that 
$\tau_1 > \tau_{\frac{1}{2}}$. We will discuss the properties of the solution in the next section. If $\tau_1= \infty$ then $x$ remains in a dynamic state forever, otherwise the same procedure can be repeated with initial condition $x(\tau_1)$ that we denote $x_1$ at time $\tau_1$ where $| b(x_1,\tau_1)| \leq f_s$. By induction, we can define similarly the entrance times in static phase $\tau_j$ with corresponding states $x_j$ and the entrance times of dynamic phase $\tau_{j+\frac{1}{2}}$.
\subsection{Properties, proofs and explicit construction of a dynamic phase}
\begin{proposition}
\label{prop2-1}
An entrance time in dynamic phase $\tau_{j+\frac{1}{2}}$ is separated from its consecutive entrance time in static phase $\tau_{j+1}$, i.e. $\tau_{j+\frac{1}{2}} < \tau_{j+1}$. Moreover, on the subinterval $(\tau_{j+\frac{1}{2}},\tau_{j+1})$, we have $\textup{sign}(\dot x(t)) \neq 0$ a.e. and the trajectory
$x(t)$ is the solution of 
\begin{equation}
\label{eq:frictionpp}
\begin{cases}
m \ddot x (t) + \textup{sign}(\dot x(t))f_d = b(x(t),t) \: a.e. t >0\\
x(\tau_{j+\frac{1}{2}}) = x_j, \: \dot x(\tau_{j+\frac{1}{2}}) = 0.
\end{cases}
\end{equation}
\end{proposition}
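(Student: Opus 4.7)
The approach will proceed in three movements. First, I will extract the pointwise structure of \eqref{eq:dynavi}: testing with $\varphi = 0$ and $\varphi = 2\dot x(t)$ gives the identity
$$(b(x(t),t) - m\ddot x(t))\,\dot x(t) = f_d|\dot x(t)|,$$
and reinserting this into \eqref{eq:dynavi} yields $(b - m\ddot x)\varphi \leq f_d|\varphi|$ for every $\varphi \in \mathbb{R}$, i.e.\ $|b(x(t),t) - m\ddot x(t)| \leq f_d$. Consequently, at a.e.\ $t \in (\tau_{j+\frac{1}{2}},\infty)$ one of two alternatives holds: either $\dot x(t) \neq 0$ and $m\ddot x(t) = b(x(t),t) - f_d\,\textup{sign}(\dot x(t))$, or $\dot x(t) = 0$; moreover on any subset of $\{\dot x = 0\}$ of positive measure one also has $\ddot x = 0$ a.e.\ and hence $|b(x(t),t)| \leq f_d$.

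Second, I will establish the strict separation $\tau_{j+\frac{1}{2}} < \tau_{j+1}$. Set $\tau := \tau_{j+\frac{1}{2}}$ and assume without loss of generality that $b(x_j,\tau) = f_s$; the argument underlying the footnote for $\tau_{\frac{1}{2}}$ then gives $b(x_j,t) > f_s$ on a right-neighborhood of $\tau$. I will introduce the classical $C^2$ Cauchy problem $m\ddot y = b(y,t) - f_d$ with $y(\tau) = x_j$, $\dot y(\tau) = 0$, and observe that $m\ddot y(t) > 0$ for $t$ just past $\tau$, so $\dot y$ is strictly increasing from zero on some short slice $(\tau,\tau+\delta)$. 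A direct verification shows that this $y$ satisfies \eqref{eq:dynavi} on $[\tau,\tau+\delta]$, so the $C^1$ uniqueness theorem of \cite{MR0348562} forces $x \equiv y$ there, whence $\dot x > 0$ on $(\tau,\tau+\delta)$ and $\tau_{j+1} > \tau_{j+\frac{1}{2}}$.

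Third, I will rule out $\dot x$ vanishing on a subset $E \subset (\tau_{j+\frac{1}{2}},\tau_{j+1})$ of positive Lebesgue measure. If such an $E$ existed, the observation from the first step would give $|b(x(t),t)| \leq f_d \leq f_s$ at a.e.\ $t \in E$, so such a $t$ would meet both static-entry conditions and lie strictly before $\tau_{j+1}$, contradicting the definition of $\tau_{j+1}$ as an infimum. Combining this non-vanishing with the first alternative of the first step then produces the differential equation \eqref{eq:frictionpp}.

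The principal obstacle is the second step, specifically the borderline case $f_s = f_d$: there $m\ddot y(\tau) = 0$, and the positivity of $\dot y$ just after $\tau$ must be extracted from the strict inequality $b(x_j,\tau+\varepsilon) > f_s$ furnished by the footnote, propagated through the continuity of $b$ and $y$, and then married to the VI uniqueness theory in order to identify $x$ with $y$ on a right-neighborhood of $\tau$. Once this identification is secured, the remainder of the proposition follows directly from the first and third steps.
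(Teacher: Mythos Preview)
Your proof is correct and follows a route that differs from the paper's in two places.

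For the separation $\tau_{j+\frac{1}{2}} < \tau_{j+1}$, the paper argues by contradiction: using continuity of $b$ with $\epsilon = (f_s-f_d)/2$ it secures $|b(x(t),t)| > f_d$ on a right-neighbourhood of $\tau_{j+\frac{1}{2}}$, and then shows from \eqref{eq:dynavi} that $\dot x$ cannot vanish on any subinterval there. Your route is instead constructive: you build the classical solution $y$ of $m\ddot y = b(y,t)-f_d$, check $\dot y>0$ nearby, verify that $y$ satisfies \eqref{eq:dynavi}, and invoke the uniqueness theorem of \cite{MR0348562} to identify $x$ with $y$. Your argument handles the borderline $f_s=f_d$ uniformly (the paper's $\epsilon$-choice explicitly requires $f_s>f_d$; the equality case is treated separately later) and delivers $\dot x>0$ on a full interval, so the strict inequality follows without having to distinguish subintervals of zeros from single zeros. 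The price is the appeal to the external uniqueness result, whereas the paper's step is self-contained. For the claim $\textup{sign}(\dot x)\neq 0$ a.e., the paper proves the stronger fact that zeros of $\dot x$ in $(\tau_{j+\frac{1}{2}},\tau_{j+1})$ are isolated, by noting that at any such zero $|b|>f_s$ and repeating the no-subinterval argument locally; your measure-theoretic version (positive-measure zero set $\Rightarrow \ddot x=0$ a.e.\ there $\Rightarrow |b|\leq f_d\leq f_s$ $\Rightarrow$ a static-entry time before $\tau_{j+1}$) is exactly what the proposition asserts and is a bit more direct. Your derivation of the ODE via $\varphi=0$ and $\varphi=2\dot x$ is equivalent to the paper's one-sided test with $\varphi\geq 0$.
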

\begin{proof}
Since the function $t \mapsto b(x(t),t)$ is continuous, for $\epsilon \triangleq \dfrac{f_s-f_d}{2}$ there is a $\delta_\epsilon>0$
such that 
$$
\forall t \in (\tau_{j+\frac{1}{2}}, \tau_{j+\frac{1}{2}} + \delta_\epsilon),
\: \left | |b(x(t),t)|-|b(x_j,\tau_{j+\frac{1}{2}})| \right | \leq \epsilon.
$$
Moreover, the condition 
$$
|b(x_j,\tau_{j+\frac{1}{2}})| = f_s
$$
implies
$$
\forall t \in (\tau_{j+\frac{1}{2}}, \tau_{j+\frac{1}{2}} + \delta_\epsilon), \: |b(x(t),t)| \geq f_s - \frac{f_s-f_d}{2} > f_d.
$$
Now, we claim that a static phase cannot occur in the time interval $(\tau_{j+\frac{1}{2}}, \tau_{j+\frac{1}{2}} + \delta_\epsilon)$.
We proceed by contradiction. Assume there is an subinterval $I \subseteq (\tau_{j+\frac{1}{2}}, \tau_{j+\frac{1}{2}} + \delta_\epsilon)$ of positive Lebesgue measure such that $\forall t \in I, \dot x (t) = 0, \: a.e.$.
Then $\forall t \in I, \: \ddot x (t) = 0, \: a.e.$ and from the V.I., we obtain
$$
\forall t \in I \: a.e. ,\: \forall \varphi \in \mathbb{R}, \: b(x(t),t) \varphi \leq f_d |\varphi|
$$
which implies
$$
|b(x(t),t)| \leq f_d.
$$
This is a contradiction since $f_d < f_s$. As a consequence $\tau_{j+\frac{1}{2}} < \tau_{j+1}$. Similarly, if at a time $t^\star \in (\tau_{j+\frac{1}{2}}, \tau_{j+1})$ one has $\dot x (t^\star) = 0$, then necessarily 
 $$
|b(x(t^\star),t^\star)| > f_s
$$
and there cannot be a small interval around $t^\star$ such that $\dot x(t) = 0$. Therefore, zeros of $\dot x(t)$, if any, are isolated points on the interval $(\tau_{j+\frac{1}{2}}, \tau_{j+1})$. This implies that the function $\textup{sign}(x(t))$ is defined a.e. on $(\tau_{j+\frac{1}{2}}, \tau_{j+1})$. So, outside of a set of measure $0$, $\dot x(t)$ is either positive or negative.
Suppose $\dot x(t)>0$ then \eqref{eq:dynavi} becomes
$$
 \forall \varphi \in \mathbb{R}, 
\: (b(x(t),t) - m \ddot x (t)) (\varphi - \dot x(t)) + f_d \dot x(t) \leq f_d |\varphi|.
$$
Thus we can state 
$$
 \forall \varphi \geq 0, 
\: (b(x(t),t) - m \ddot x (t) - f_d) (\varphi - \dot x(t)) \leq 0.
$$
Since $\dot x(t) >0$, necessarily
$$ 
m \ddot x (t)  + f_d = b(x(t),t)
$$
Similarly when $\dot x(t) <0$, we can check that
$$ 
m \ddot x (t)  - f_d = b(x(t),t)
$$
and thus \eqref{eq:frictionpp} holds true. The proof has been completed.
\end{proof}
\begin{remark}
Of course the V.I. of type \eqref{eq:dynavi} on 
$(\tau_{j+\frac{1}{2}}, \tau_{j+1})$ is equivalent to \eqref{eq:frictionpp}
together with the fact that the zeros of $\dot x(t)$ are isolated points. 
But one needs to define $\tau_{j+1}$ which depends on the equation.
We cannot just write equation \eqref{eq:frictionpp} after $\tau_{j+\frac{1}{2}}$
because we cannot define the function $\textup{sign}(\dot x(t))$ without prior knowledge that the zeros of $\dot x(t)$ are isolated.
We need to define a well posed problem, valid for any time posterior to $\tau_{j+\frac{1}{2}}$.
Equation \eqref{eq:frictionpp} cannot be used.
The V.I. is an elegant way to fix this difficulty. The only alternative is to construct the trajectory in the
dynamic phase step by step, as we are going to do below. The V.I. is a synthetic way to define the solution,
without a lengthy construction. As we shall see, when $f_s = f_d$, it will also incorporates the static phase, and
thus avoids any sequence of intervals. 
\end{remark}
\subsection{A step by step method for the dynamic phase as an alternative to the V.I.}
\label{sec:theory}
We now check that we can also define a sequence of sub phases of the dynamic phase, in which the solution satisfies a standard differential equation. This procedure is an alternative to the V.I. but is much less synthetic. 
The first dynamic sub-phase starts at 
$
\tau_{\frac{1}{2}} \triangleq \inf \{ t \geq \tau_0, \: |b(x(t),t)| > f_s \}
$
where we recall that $\tau_0 \triangleq 0$. 
Define 
$$
x_0^0 \triangleq x(0), \quad \tau_0^0 \triangleq \tau_{\frac{1}{2}},
$$ 
thus
$$
|b(x_0^0,\tau_0^0)| = f_s.
$$
We set $\epsilon_0^0 \triangleq \textup{sign} (b(x_0^0,\tau_0^0))$ and consider the differential equation
\begin{equation}
\label{eq:eqdifstand}
\begin{cases}
& m \ddot x (t) = b(x(t),t) - \epsilon_0^0 f_d, \: t > \tau_0^0,\\
& x(\tau_0^0) = x_0^0, \quad \dot x(\tau_0^0) = 0.
\end{cases}
\end{equation} 
Then, we define 
$$
\tau_0^1 \triangleq \inf \{ t > \tau_0^0, \: \dot x(t) = 0 \}
\quad
\mbox{and}
\quad
x_0^1 \triangleq x(\tau_0^1).
$$
\begin{itemize}
\item
If $|b(x_0^1,\tau_0^1)| \leq f_s$ then the dynamic phase ends here and a new static phase starts.
We set 
$$
\tau_1 \triangleq \tau_0^1
\quad
\mbox{and}
\quad
x_1 \triangleq x_0^1.
$$
\item
Otherwise $|b(x_0^1,\tau_0^1)| > f_s$ and we set 
$$
\epsilon_0^1 \triangleq \textup{sign} (b(x_0^1,\tau_0^1))
$$
and again consider the differential equation
\begin{equation}
\label{eq:eqdifstand0}
\begin{cases}
& m \ddot x (t) = b(x(t),t) - \epsilon_0^1 f_d, \: t > \tau_0^1,\\
& x(\tau_0^1) = x_0^1, \quad \dot x(\tau_0^1) = 0
\end{cases}
\end{equation} 
and introduce 
$$
\tau_0^2 \triangleq \inf \{ t > \tau_0^1, \: \dot x(t) = 0 \}.
$$
We can repeat the same procedure several times and thus define a sequence $\tau_0^k, x_0^k, \epsilon_0^k$ for $k \leq k(0)$ where 
$$
k(0) \triangleq \inf \{ k \geq 1, \: |b(x_0^k,\tau_0^k)| \leq f_s \} .
$$
If $k(0) = \infty$ then $x$ remains in a dynamic phase forever otherwise a new static phase starts at 
$$
\tau_1 \triangleq \tau_0^{k(0)}
\quad
\mbox{and}
\quad
x_1 \triangleq x_0^{k(0)}.
$$
The dynamic phase on the interval $(\tau_{\frac{1}{2}},\tau_1)$ is thus divided into dynamic sub-phases $(\tau_0^k,\tau_0^{k+1})$ with
$$
\tau_0 \leq \tau_{\frac{1}{2}} = \tau_0^0 < \tau_0^1 < \cdots < \tau_0^{k(0)} = \tau_1.
$$
\end{itemize}
Similarly, if for any $j \geq 1$ we know $x_j$ and $\tau_j$ with $|b(x_j,\tau_j)| \leq f_s$.
The first dynamic sub phase starts at 
$$
\tau_j^0 \triangleq \tau_{j+\frac{1}{2}}, \; x_j^0 \triangleq x_j , \: \mbox{where} \: |b(x_j^0,\tau_j^0)| = f_s.
$$
For each $k \geq 0$, to define the dynamic sub phase starting at $\tau_j^k$, we set
$$
\epsilon_j^k \triangleq \textup{sign}(b(x_j^k,\tau_j^k))
$$
and consider the differential equation
\begin{equation}
\label{eq:eqdifstand}
\begin{cases}
& m \ddot x (t) = b(x(t),t) - \epsilon_j^k f_d, \: t > \tau_j^k,\\
& x(\tau_j^k) = x_j^k, \quad \dot x(\tau_j^k) = 0.
\end{cases}
\end{equation}
Then, we uniquely define $\tau_j^{k+1}$ by
$$
\tau_j^{k+1} \triangleq \inf \{ t > \tau_j^k, \dot x(t) = 0 \}.
$$
This procedure defines the dynamic sub phase $(\tau_j^k,\tau_j^{k+1})$.
On this subinterval $\epsilon_j^k = \textup{sign}(\dot x(t))$, so \eqref{eq:eqdifstand} is also the equation
\begin{equation}
m \ddot x (t) = b(x(t),t) - \textup{sign}(\dot x(t)) f_d.
\end{equation}
We set $x_j^{k+1} \triangleq x(\tau_j^{k+1})$ and we proceed as follows:
\begin{itemize}
\item
if $| b(x_j^{k+1},\tau_j^{k+1}) | \leq f_s$
then we start a new static phase and we set 
$$
\tau_{j+1} \triangleq \tau_j^{k+1} 
\:
\mbox{and}
\: 
x_{j+1} \triangleq x_j^{k+1}.
$$
\item
on the other hand, if $| b(x_j^{k+1},\tau_j^{k+1}) | > f_s$
then we start a new dynamic sub phase on the interval $(\tau_j^{k+1},\tau_j^{k+2})$.
\end{itemize}
We can define 
$$
k(j) \triangleq \inf \{ k \geq 1, \: |b(x_j^k,\tau_j^k)| \leq f_s \},
$$
thus the dynamic phase on the interval $(\tau_{j+\frac{1}{2}},\tau_{j+1})$ is divided into dynamic sub-phases $(\tau_j^k,\tau_j^{k+1})$ with
$$
\tau_j \leq \tau_{j+\frac{1}{2}} = \tau_j^0 < \tau_j^1 < \cdots < \tau_j^{k(j)} = \tau_{j+1}.
$$
This procedure gives explicitly the solution of the V.I.. 
We will see in the next section that, when considering the case $b(x,t) = K ( \beta T(t) - x)$,
we have the additional advantage that the solution is explicit in a dynamic sub phase.
\subsection{Formula for $x(t)$ on a dynamic sub phase when $b(x,t) = K ( \beta T(t) - x)$}
When $b(x,t) = K ( \beta T(t) - x)$, the differential equation \eqref{eq:eqdifstand} has an explicit solution.
For $t \in [ \tau_j^k, \tau_j^{k+1})$,
\begin{equation}
\label{eq:xexplicit}
x(t) 
= 
x_j^k \cos \omega \left (t - \tau_j^k \right )
+
\int_{\tau_j^k}^t \sin \omega (t - s) \dfrac{K \beta T(s) - \epsilon_j^k f_d}{m \omega} \textup{d} s
\end{equation}
and $\tau_j^{k+1}$ is defined by the equation
\begin{equation}
\label{eq:taujkp1}
x_j^k \sin \omega \left (\tau_j^{k+1} - \tau_j^k \right )
= 
\int_{\tau_j^k}^{\tau_j^{k+1}} \cos \omega (\tau_j^{k+1} - s) \dfrac{K \beta T(s) - \epsilon_j^k f_d}{m \omega} \textup{d} s.
\end{equation}
We then set 
\begin{equation}
\label{eq:xjkp1}
x_j^{k+1} \triangleq x(\tau_j^{k+1}) 
= x_j^k \cos \omega \left (\tau_j^{k+1} - \tau_j^k \right )
+
\int_{\tau_j^k}^{\tau_j^{k+1}} \sin \omega (\tau_j^{k+1} - s) \dfrac{K \beta T(s) - \epsilon_j^k f_d}{m \omega} \textup{d} s.
\end{equation}
\subsection{Algorithm}
The above discussion allows to define an algorithm to construct the trajectory $x(t)$ of the initial problem
\eqref{eq:friction1} and \eqref{eq:friction2} with $b(x,t)$ given in \eqref{eq:engapp}.
We construct the sequence $\{ x_j,\tau_j, \: j \geq 0 \}$ where
$$
|b(x_j, \tau_j)| \leq f_s.
$$
When $j = 0$, it corresponds to the initial condition $x_0 \triangleq x(0)$ and $\tau_0 \triangleq 0$.
For each $j \geq 0$, 
we first compute 
$$
\tau_{j+\frac{1}{2}} \triangleq \inf \{ t \geq \tau_j, \: |b(x_j,t)| > f_s \}
$$
and then we define a subsequence with respect to $k$.
\begin{itemize}
\item
Define 
$$
\tau_j^0 \triangleq \tau_{j+\frac{1}{2}}, 
\quad x_j^0 \triangleq x_j,
\quad \epsilon_j^0 \triangleq \textup{sign} (b(x_j^0,\tau_j^0)).
$$
\item
For each $k \geq 0$, using $\epsilon_j^k$, 
define $\tau_j^{k+1}$ using \eqref{eq:taujkp1} and $x_j^{k+1}$ using \eqref{eq:xjkp1}.
\item
If 
$$
|b(x_j^{k+1},\tau_j^{k+1})| \leq f_s
$$
then
$$
\tau_{j+1} \triangleq \tau_j^k \: \mbox{and} \: x_{j+1} \triangleq x_j^{k+1}
$$
otherwise we define 
$$
\epsilon_j^{k+1} \triangleq \textup{sign} (b(x_j^{k+1},\tau_j^{k+1}))
$$
and repeat the procedure with the definition of $\tau_j^{k+2},x_j^{k+2}$.
\end{itemize}
The sub interval $(\tau_j, \tau_{j+\frac{1}{2}})$ is a static or stick subphase, and the sub interval $(\tau_{j+\frac{1}{2}}, \tau_{j+1})$ 
is a dynamic or slip phase, itself subdivided into dynamic subphases. The trajectory $x(t)$ is completely defined by this cascade of phases and subphases.

\subsection{Quasistatic approximation}
Since Equation \eqref{eq:taujkp1} is transcendental, 
we need also an algorithm to solve it.
We are going to state an approximation which simplifies considerably the calculations.
This approximation is called quasistatic which means that the excitation variation is slow enough to be neglected during any dynamic phase.
From then on, an interesting consequence of this approximation is that there are no subphases in the dynamic phases.
We will see below
that, with the definition of sub phases of the Section 2.4 in mind, one has  $x_j^1$ and $\tau_j^1$ satisfy 
$$
|K (\beta T(\tau_j^1) - x_j^1)| \leq f_s.
$$
So there is only one sequence $\tau_j,x_j$ and once $\tau_{j+\frac{1}{2}}$ is defined, 
the equation for $\tau_j^1$ becomes simply an equation for $\tau_{j+1}$ 
which is
\begin{equation}
\label{eq:taujp1}
x_j \sin \omega \left (\tau_{j+1} - \tau_j \right )
= 
\int_{\tau_j}^{\tau_{j+1}} \cos \omega (\tau_{j+1} - s) \dfrac{K \beta T(s) - \epsilon_j f_d}{m \omega} \textup{d} s.
\end{equation}
with 
$$
\epsilon_j \triangleq \textup{sign} (\beta T(\tau_{j+\frac{1}{2}}) - x_j).
$$
Next finding $x_{j+1}$ uses Equation \eqref{eq:xjkp1} as follows:
\begin{equation}
\label{eq:xjp1}
x_{j+1} \triangleq x_j \cos \omega \left (\tau_{j+1} - \tau_{j+\frac{1}{2}} \right )
+
\int_{\tau_{j+\frac{1}{2}}}^{\tau_{j+1}} \sin \omega (\tau_{j+1} - s) \dfrac{K \beta T(s) - \epsilon_j f_d}{m \omega} \textup{d} s.
\end{equation}
The quasistatic approximation consists in making the approximation
$$
\forall s \in (\tau_{j+\frac{1}{2}},\tau_{j+1}), \quad T(s) = T(\tau_{j+\frac{1}{2}}).
$$
We will use the notation $T_{j+\frac{1}{2}} \triangleq T(\tau_{j+\frac{1}{2}})$.
From \eqref{eq:xjp1}, we obtain
$$
\sin (\omega (\tau_{j+1}-\tau_{j+\frac{1}{2}})) = 0
$$
which implies 
$$
\tau_{j+1} = \tau_{j+\frac{1}{2}} + \frac{\pi}{\omega}.
$$
We turn to \eqref{eq:xjp1} which simplifies considerably using 
\begin{equation}
\label{eq:interpret}
|\beta T_{j+\frac{1}{2}} - x_j| = \frac{f_s}{K},
\end{equation}
and we obtain
\begin{equation}
\label{eq:2-10}
x_{j+1} = x_j + 2 \epsilon_j \frac{f_s-f_d}{K}.
\end{equation}
Of course, we need to find 
$$
\tau_{j+\frac{1}{2}} = \inf \{ t > \tau_j, \: |\beta T(t) - x_j| > \frac{f_s}{K}\}
$$
and then define 
$$
\tau_{j+1} = \tau_{j+\frac{1}{2}} + \frac{\pi}{\omega}.
$$
Also
$$
\epsilon_{j+1} \triangleq 
\textup{sign}(\beta T_{j+\frac{3}{2}} - x_{j+1}).
$$
When we are in a static phase at the value $x_j$, starting at $\tau_j$,
we can interpret the condition \eqref{eq:interpret} as a condition that the future temperature 
must satisfy to initiate a new dynamic phase.

\subsection{Case $f \triangleq f_s=f_d$ }
When $f \triangleq f_s=f_d$, we do not need to define the static phases as in Section 2.2 above.
Indeed, we have seen in the proof of Proposition \ref{prop2-1}, that if $\dot x(t)=0$ on a set of positive measure, then necessarily we have $|\beta T(t)-x(t)|\leq\dfrac{f}{K}$.
This means that during a static phase, the V.I. is also satisfied.
Therefore, the problem is simply 
\begin{equation}
\label{eq:2-17}
\forall\varphi\in \mathbb{R}, \;
	(m \ddot x(t)-K(\beta T(t)-x(t)))(\dot x(t)-\varphi)+f |\dot x(t)|\leq f |\varphi|
\end{equation}
with
\[
	x(0)=x_{0}, \:\dot x(0)=0.
\]
In the case $f_s=f_d,$ we also see from the approximation formula \eqref{eq:2-10} that $x_{j}$ is constant.
So the system enters in the static mode at the same point.
Since in our assumption $\dot x(0)=0$ and $x(0)=x_{0},$ we have $x_{j}=x_{0}$.
This is only an approximation.
The alternance of static and dynamic modes follows the sequence of times, $\tau_{j},$$\tau_{j+\frac{1}{2}}$, such that 
\begin{equation}
	\tau_{j+\frac{1}{2}}=\inf \left \{t>\tau_{j}\:|\;T(t) \notin 
	\left [ \frac{x_0}{\beta} - \frac{f}{\beta K},\frac{x_0}{\beta} + \frac{f}{\beta K} \right ] \right \}
	\label{eq:2-14}
\end{equation}
\[
	\tau_{j+1}-\tau_{j+\frac{1}{2}}=\dfrac{\pi}{\omega}
\]
with $\tau_{0}=0$.
What is not an approximation is the following discussion.
In a static mode $\tau_{j}\leq t\leq$$\tau_{j+\frac{1}{2}},$ we have 
\begin{equation}
	|\beta T(t)-x(t)|\leq\dfrac{f}{K}
\end{equation}
and in a dynamic mode,$\tau_{j+\frac{1}{2}}\leq t\leq\tau_{j+1}$ we have the equation 
\begin{equation}
	m \ddot x(t)=K(\beta T(t)-x(t))-\epsilon_{j}f_s\label{eq:2-16}
\end{equation}
and $x(t)$ is $C^{1}$, with $\ddot x(t)$ locally bounded. 

\section{Simulations}
\subsection{Step by step Euler method : a discrete version of the two-phase model}
\label{numerics}
Let $T>0, N \in \mathbb{N}^\star$ and $\{ t_n \}_{n=0}^N$ a family of time which discretizes $[0,T]$
such that $t_n \triangleq n h$ where $h \triangleq \frac{T}{N}$. We will construct 
a sequence $\{ (X_{t_n}^h, \dot X_{t_n}^h) \}_{n=0}^N$
where for each $n$, $(X_{t_n}^h, \dot X_{t_n}^h)$ is meant to approximate $(x(t_n),\dot x(t_n))$.
\subsubsection{Case $f_d = f_s$}
When $f \triangleq f_d = f_s$, the dynamics of $x(t)$ is governed by the variational inequality \eqref{eq:vi}.
Thus a finite difference method leads to a discrete V.I. as follows:
$$
X_{t_0}^h \triangleq x_0, 
\: \dot X_{t_0}^h  \triangleq 0
$$
then for $n = 0, 1, \cdots , N-1$
$$
X_{t_{n+1}}^h \triangleq  X_{t_n}^h + h \dot X_{t_n}^h
$$
and $\dot X_{t_{n+1}}^h$ is defined as the unique solution of the discrete V.I.
$$
\forall \varphi \in \mathbb{R}, \: 
\left ( b(X_{t_n}^h,t_n) - m \left ( \dfrac{\dot X_{t_{n+1}}^h - \dot X_{t_n}^h}{h} \right ) \right ) (\varphi - \dot X_{t_{n+1}}^h )
+ f | \dot X_{t_{n+1}}^h| \leq f |\varphi|
$$
which is equivalent to defining $\dot X_{t_{n+1}}^h$ using a discrete version of a two-phase model in this way:
\begin{itemize}
\item
if
$$
\left | \dot X_{t_n}^h + \dfrac{h}{m} b(X_{t_n}^h,t_n) \right | \leq \frac{h}{m} f 
$$
then
$$
\dot X_{t_{n+1}}^h \triangleq 0.
$$
\item
if 
$$
\left | \dot X_{t_n}^h + \dfrac{h}{m} b(X_{t_n}^h,t_n) \right | > \frac{h}{m} f 
$$
then
$$
\dot X_{t_{n+1}}^h \triangleq \dot X_{t_n}^h + \dfrac{h}{m} \left ( b(X_{t_n}^h,t_n) - f \epsilon_{t_n}^h \right ), 
\: \mbox{ where } \: \epsilon_{t_n}^h \triangleq \textup{sign}\left (\dot X_{t_n}^h + \dfrac{h}{m} b(X_{t_n}^h,t_n) \right ).
$$
\end{itemize}
\subsubsection{\chm{Case} $f_d < f_s$}
When $f_d$ and $f_s$ are not identical, the dynamics of $x(t)$ is governed by the two-phase model for which the dynamic phase remains governed by \eqref{eq:vi} but not the static phase. The finite difference method becomes

$$
X_{t_0}^h \triangleq x_0, 
\: \dot X_{t_0}^h  \triangleq 0
$$
then for $n = 0, 1, \cdots , N-1$
$$
X_{t_{n+1}}^h \triangleq  X_{t_n}^h + h \dot X_{t_n}^h
$$
and $\dot X_{t_{n+1}}^h$ is defined using a discrete version of a two-phase model:
\begin{itemize}
\item
if
$$
\left | \dot X_{t_n}^h + \dfrac{h}{m} b(X_{t_n}^h,t_n) \right | \leq \frac{h}{m} f_s 
$$
then
$$
\dot X_{t_{n+1}}^h \triangleq 0.
$$
\item
if 
$$
\left | \dot X_{t_n}^h + \dfrac{h}{m} b(X_{t_n}^h,t_n) \right | > \frac{h}{m} f_s 
$$
then $\dot X_{t_{n+1}}^h$ is defined as the unique solution of the discrete V.I.
$$
\forall \varphi \in \mathbb{R}, \: 
\left ( b(X_{t_n}^h,t_n) - m \left ( \dfrac{\dot X_{t_{n+1}}^h - \dot X_{t_n}^h}{h} \right ) \right ) (\varphi - \dot X_{t_{n+1}}^h )
+ f_d | \dot X_{t_{n+1}}^h| \leq f_d |\varphi|.
$$
which means
$$
\dot X_{t_{n+1}}^h \triangleq \dot X_{t_n}^h + \dfrac{h}{m} \left ( b(X_{t_n}^h,t_n) - f_d \epsilon_{t_n}^h \right ), 
\: \epsilon_{t_n}^h \triangleq \textup{sign}\left (\dot X_{t_n}^h + \dfrac{h}{m} b(X_{t_n}^h,t_n) \right ).
$$
\end{itemize}
\subsubsection{Simulation of the response of a system with dry friction to harmonic excitation}
We consider the model studied by Shaw in \cite{MR0852197}.
We apply the algorithm above to simulate $x(t)$ satisfying \eqref{eq:friction1}
where the right hand side is of the form
\begin{equation}
\label{eq:shaw}
b(x(t),\dot x(t),t) \triangleq \beta \cos (\omega t) - 2 \alpha \dot x(t) - x(t).
\end{equation}
To ensure sticking motions, we chose the parameters as follows
\begin{equation}
\label{parameters}
\tag{$\mathcal{P}$}
m = 1,
\: f_d = 1, 
\: f_s = 1.2, 
\: \alpha = 0,
 \: \beta = 6,
 \: \omega = \frac{1}{2}. 
 \end{equation}
This choice is inspired from Figure 8 p. 315 in \cite{MR0852197}. The numerical results are shown in Figure \ref{fig:shaw}.
We recover the results obtained by \cite{MR0852197}.

\begin{figure}[h!]
	\centering
\begin{tikzpicture}[scale=1]
\begin{axis}[legend style={at={(0,1)},anchor=north west}, compat=1.3,
  xmin=-1, xmax=28,ymin=-7.5,ymax=7.5,
  xlabel= {Time $t$},
  ylabel= {Displacement $x(t)$}]
  
\addplot[thick,dashdotted,color=black,mark=none,mark size=1.5pt] table [x index=0, y index=1]{CASE-RHO-0/case1_s1.txt}; 
\addplot[thick,dashdotted,color=black,mark=none,mark size=1.5pt] table [x index=0, y index=1]{CASE-RHO-0/case1_s2.txt}; 
\addplot[thick,solid,color=black,mark=none,mark size=1.5pt] table [x index=0, y index=1]{CASE-RHO-0/case1_d1.txt}; 
\addplot[thick,solid,color=black,mark=none,mark size=1.5pt] table [x index=0, y index=1]{CASE-RHO-0/case1_d2.txt}; 

\addplot[color=gray,solid] coordinates {(0,0)(26,0)};	

\addplot[thick,color=gray,dotted] coordinates {(0,0)(0,6)};
\addplot[thick,color=gray,dotted] coordinates {(2.57,0)(2.57,6)};

\addplot[thick,color=gray,dotted] coordinates {(13.38,0)(13.38,-6.24223)};
\addplot[thick,color=gray,dotted] coordinates {(14.86,0)(14.86,-6.24223)};

\addplot[thick,color=gray,dotted] coordinates {(25.74,0)(25.74,6.22228)};

\draw[fill = black] (axis cs: 0,0) circle (1.5pt);
\node at (axis cs: 0,-1) {$\tau_0$};

\draw[fill = black] (axis cs: 2.58,0) circle (1.5pt);
\node at (axis cs: 2.58,-1) {$\tau_{\frac{1}{2}}$};

\draw[fill = black] (axis cs: 13.38,0) circle (1.5pt);
\node at (axis cs: 13.30,1) {$\tau_1$};

\draw[fill = black] (axis cs: 14.86,0) circle (1.5pt);
\node at (axis cs: 14.86,1) {$\tau_{\frac{3}{2}}$};

\draw[fill = black] (axis cs: 25.74,0) circle (1.5pt);
\node at (axis cs: 25.74,-1) {$\tau_2$};

\end{axis}
\end{tikzpicture}
\begin{tikzpicture}[scale=1]
\begin{axis}[legend style={at={(0,1)},anchor=north west}, compat=1.3,
  xmin=-1, xmax=28,ymin=-3,ymax=3,
  xlabel= {Time $t$},
  ylabel= {Velocity $\dot x(t)$}]

\addplot[thick,dashdotted,color=black,mark=none,mark size=1.5pt] table [x index=0, y index=2]{CASE-RHO-0/case1_s1.txt}; 
\addplot[thick,dashdotted,color=black,mark=none,mark size=1.5pt] table [x index=0, y index=2]{CASE-RHO-0/case1_s2.txt}; 
\addplot[thick,solid,color=black,mark=none,mark size=1.5pt] table [x index=0, y index=2]{CASE-RHO-0/case1_d1.txt}; 
\addplot[thick,solid,color=black,mark=none,mark size=1.5pt] table [x index=0, y index=2]{CASE-RHO-0/case1_d2.txt};

\addplot[color=gray,solid] coordinates {(0,0)(26,0)};	

\draw[fill = black] (axis cs: 0,0) circle (1.5pt);
\node at (axis cs: 0,-0.5) {$\tau_0$};

\draw[fill = black] (axis cs: 2.58,0) circle (1.5pt);
\node at (axis cs: 2.58,-0.5) {$\tau_{\frac{1}{2}}$};

\draw[fill = black] (axis cs: 13.38,0) circle (1.5pt);
\node at (axis cs: 13.30,0.5) {$\tau_1$};

\draw[fill = black] (axis cs: 14.86,0) circle (1.5pt);
\node at (axis cs: 14.86,0.5) {$\tau_{\frac{3}{2}}$};

\draw[fill = black] (axis cs: 25.74,0) circle (1.5pt);
\node at (axis cs: 25.74,-0.5) {$\tau_2$};

\end{axis}
\end{tikzpicture}
\begin{tikzpicture}[scale=1]
\begin{axis}[legend style={at={(0,1)},anchor=north west}, compat=1.3,
  xmin=-3, xmax=3,ymin=-2.5,ymax=2.5,
  xlabel= {Velocity $\dot x(t)$},
  ylabel= {Force $b(x(t),\dot x(t),t)$}]
\addplot[thick,dashdotted,color=black,mark=none,mark size=3pt] table [x index=2, y index=4]{CASE-RHO-0/case1_s1.txt};
\draw[fill = black] (axis cs: 0,0) circle (1.5pt);
\node at (axis cs: 0.3,0) {$\tau_0$};
\draw[fill = black] (axis cs: 0,-1.2) circle (1.5pt);
\node at (axis cs: 0.3,-1.2) {$\tau_{\frac{1}{2}}$};
\draw[fill = black] (axis cs: 0,0.365928) circle (1.5pt);
\node at (axis cs: 0.3,0.365928) {$\tau_1$};
\addplot[thick,color=black,mark=none,mark size=3pt] table [x index=2, y index=4]{CASE-RHO-0/case1_d1.txt}; 
\addplot[thick,dashdotted,color=black,mark=none,mark size=3pt] table [x index=2, y index=4]{CASE-RHO-0/case1_s2.txt};
\draw[fill = black] (axis cs: 0,1.2) circle (1.5pt);
\node at (axis cs: -0.3,1.2) {$\tau_{\frac{3}{2}}$};
\addplot[thick,color=black,mark=none,mark size=3pt] table [x index=2, y index=4]{CASE-RHO-0/case1_d2.txt}; 
\draw[fill = black] (axis cs: 0,-0.291287) circle (1.5pt);
\node at (axis cs: -0.25,-0.291287) {$\tau_2$};

\end{axis}
\end{tikzpicture}
\caption{Simulation of trajectories.
Typical trajectory of a solution $x(t)$ with an harmonic forcing (Equation \eqref{eq:friction1} with \eqref{eq:shaw}).
The numerical results have been obtained using the numerical scheme shown in Section \ref{numerics}.
The time intervals enclosed by
$[\tau_0,\tau_{\frac{1}{2}})$ 
and   
$[\tau_1,\tau_{\frac{3}{2}})$ 
correspond to static phases whereas 
the intervals 
$[\tau_{\frac{1}{2}},\tau_1)$ 
and   
$[\tau_{\frac{3}{2}},\tau_2)$
correspond to dynamic phases. 
}
\label{fig:shaw}
\end{figure}
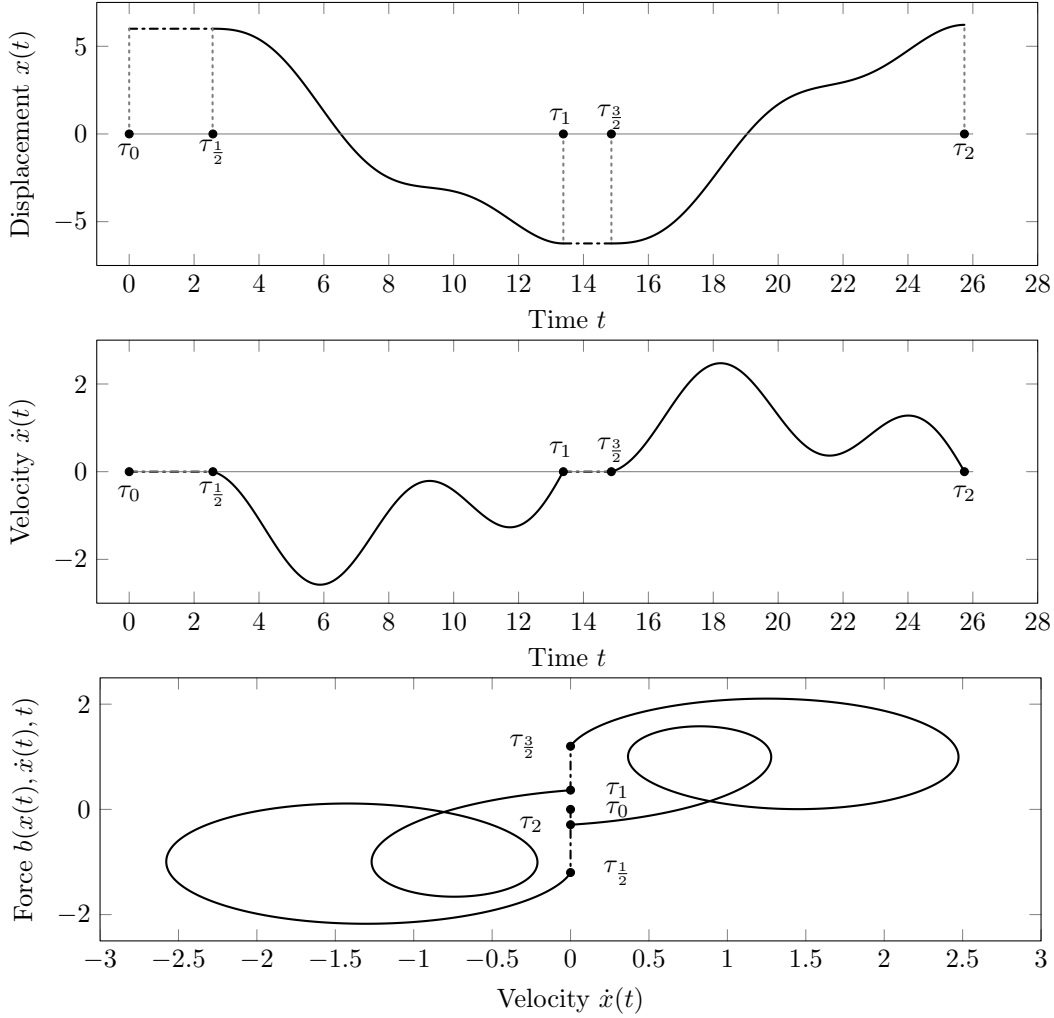

\subsubsection{Simulation of the response of a system with dry friction to a random perturbation}
Using the numerical scheme above, the behaviour of \eqref{eq:friction1} - \eqref{eq:friction2}
has been simulated with a temperature function chosen as 
$T(t) = \cos(\omega t) + \rho v(t)$ where $v(t)$ is an Ornstein Uhlenbeck noise in the sense that 
$$
\dot v = - v + \dot w, \: \mbox{ where } \: w \: \mbox{ is a Wiener process.} 
$$
This can be seen as a random perturbation of the model on the section above.
Here $\rho$ is a small parameter. Figure \ref{fig:temporal_and_phase_plan} display the result of the simulation.
we used the parameters \eqref{parameters} and $K = 1, \rho = 0.25$.
Figure \ref{fig:temporal_and_phase_plan} displays 
the displacement $x(t)$ in response to $\beta T(t)$,
and the velocity $\dot x (t)$ together with the forces $\mathbb{F}(\dot x(t))$ and $K(\beta T(t)-x(t))$.
Figure \ref{fig:temporal_and_phase_plan} \textbf{C} displays the evolution of $(\beta T(t)-x(t), \dot x(t))$ with respect to $t$ in a sort of phase plan.
The static phases occur during the time intervals $(\tau_j,\tau_{j+\frac{1}{2}})$
and are noticeable in Figure \ref{fig:temporal_and_phase_plan} when $x(t)$ is constant (Figure \ref{fig:temporal_and_phase_plan} \textbf{A}) 
or $\dot x(t) = 0$ (Figure \ref{fig:temporal_and_phase_plan} \textbf{B}) over time intervals with positive Lebesgue measure.
In contrast, the dynamic phases occur during the time intervals $(\tau_{j+\frac{1}{2}},\tau_{j+1})$
and are noticeable in Figure \ref{fig:temporal_and_phase_plan} \textbf{B} as the peaks and in Figure \ref{fig:temporal_and_phase_plan} \textbf{C} as the cycles. In this simulation, there are subphases in the dynamic phases.
The likeliness of occurrence of dynamic sub phases is highly dependent on important variations of the temperature $T(t)$.
 
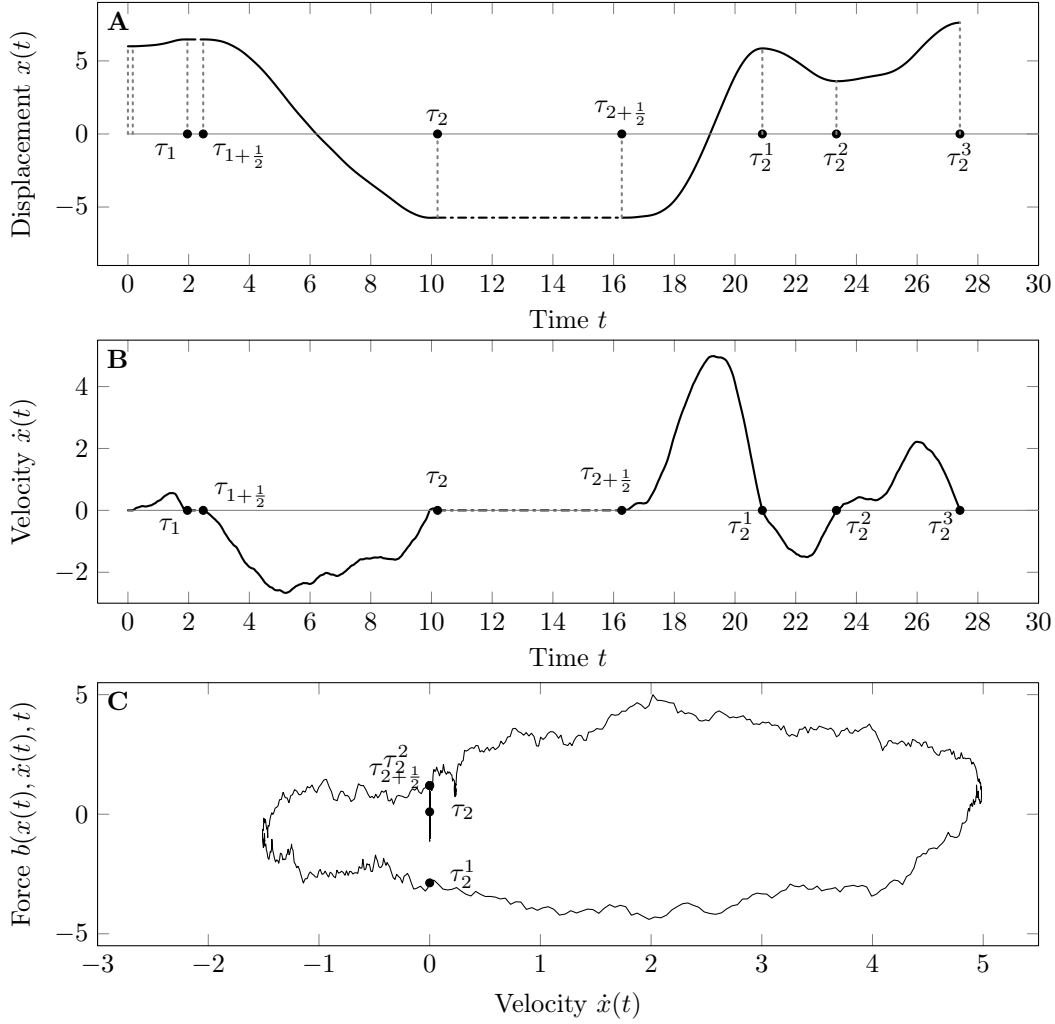
\begin{figure}[h!]
\begin{tikzpicture}[scale=1]
\begin{axis}[legend style={at={(0,1)},anchor=north west}, compat=1.3,
  xmin=-1, xmax=30,ymin=-9,ymax=9,
  xlabel= {Time $t$},
  ylabel= {Displacement $x(t)$}]
 \node[below right] at (current axis.north west) {\chm{\textbf{A}}};
\addplot[thick,dashdotted,color=black,mark=none,mark size=1.5pt] table [x index=0, y index=1]{CASE-RHO-025/case1rho025s1.txt}; 
\addplot[thick,dashdotted,color=black,mark=none,mark size=1.5pt] table [x index=0, y index=1]{CASE-RHO-025/case1rho025s2.txt}; 
\addplot[thick,dashdotted,color=black,mark=none,mark size=1.5pt] table [x index=0, y index=1]{CASE-RHO-025/case1rho025s3.txt}; 

\addplot[thick,color=black,mark=none,mark size=1.5pt] table [x index=0, y index=1]{CASE-RHO-025/case1rho025d1.txt}; 
\addplot[thick,color=black,mark=none,mark size=1.5pt] table [x index=0, y index=1]{CASE-RHO-025/case1rho025d2.txt}; 
\addplot[thick,color=black,mark=none,mark size=1.5pt] table [x index=0, y index=1]{CASE-RHO-025/case1rho025d3.txt};

\addplot[color=gray,solid] coordinates {(0,0)(50,0)};	

\addplot[thick,color=gray,dotted] coordinates {(0,0)(0,6)};
\addplot[thick,color=gray,dotted] coordinates {(0.16,0)(0.16,6)};
%
\addplot[thick,color=gray,dotted] coordinates {(1.96,0)(1.96,6.46726)};
\addplot[thick,color=gray,dotted] coordinates {(2.48,0)(2.48,6.46726)};
%
\addplot[thick,color=gray,dotted] coordinates {(10.20,0)(10.20,-5.7267)};
\addplot[thick,color=gray,dotted] coordinates {(16.27,0)(16.27,-5.7267)};
%
%
%
%
%
\draw[fill = black] (axis cs: 1.96,0) circle (1.5pt) node[below left] {$\tau_1$};;

\draw[fill = black] (axis cs: 2.48,0) circle (1.5pt) node[below right] {\chm{$\tau_{1+\frac{1}{2}}$}};

\draw[fill = black] (axis cs: 10.20,0) circle (1.5pt) node[above] {$\tau_2$};

\draw[fill = black] (axis cs: 16.27,0) circle (1.5pt) node[above] {$\tau_{2+\frac{1}{2}}$};

\draw[fill = black] (axis cs: 20.9,0) circle (1.5pt) node[below] {$\tau_2^1$};

\addplot[thick,color=gray,dotted] coordinates {(20.9,0)(20.9,5.855)};

\draw[fill = black] (axis cs: 23.34,0) circle (1.5pt) node[below] {$\tau_2^2$};
\addplot[thick,color=gray,dotted] coordinates {(23.34,0)(23.34,3.611)};

\draw[fill = black] (axis cs: 27.41,0) circle (1.5pt) node[below] {$\tau_2^3$};
\addplot[thick,color=gray,dotted] coordinates {(27.41,0)(27.41,7.615)};

\end{axis}
\end{tikzpicture}
\begin{tikzpicture}[scale=1]
\begin{axis}[legend style={at={(0,1)},anchor=north west}, compat=1.3,
  xmin=-1, xmax=30,ymin=-3,ymax=5.5,
  xlabel= {Time $t$},
  ylabel= {Velocity $\dot x(t)$}]
 \node[below right] at (current axis.north west)  {\chm{\textbf{B}}};
\addplot[thick,dashdotted,color=black,mark=none,mark size=1.5pt] table [x index=0, y index=2]{CASE-RHO-025/case1rho025s1.txt}; 
\addplot[thick,dashdotted,color=black,mark=none,mark size=1.5pt] table [x index=0, y index=2]{CASE-RHO-025/case1rho025s2.txt}; 
\addplot[thick,dashdotted,color=black,mark=none,mark size=1.5pt] table [x index=0, y index=2]{CASE-RHO-025/case1rho025s3.txt}; 

\addplot[thick,color=black,mark=none,mark size=1.5pt] table [x index=0, y index=2]{CASE-RHO-025/case1rho025d1.txt}; 
\addplot[thick,color=black,mark=none,mark size=1.5pt] table [x index=0, y index=2]{CASE-RHO-025/case1rho025d2.txt}; 
\addplot[thick,color=black,mark=none,mark size=1.5pt] table [x index=0, y index=2]{CASE-RHO-025/case1rho025d3.txt}; 

\addplot[color=gray,solid] coordinates {(0,0)(50,0)};	

\draw[fill = black] (axis cs: 1.96,0) circle (1.5pt);
\node at (axis cs: 1.4,-0.5) {$\tau_1$};

\draw[fill = black] (axis cs: 2.48,0) circle (1.5pt);
\node at (axis cs: 3.75,0.5) {$\tau_{1+\frac{1}{2}}$};

\draw[fill = black] (axis cs: 10.20,0) circle (1.5pt);
\node at (axis cs: 10.20,1) {$\tau_2$};

\draw[fill = black] (axis cs: 16.27,0) circle (1.5pt);
\node at (axis cs: 15.8,1) {$\tau_{2+\frac{1}{2}}$};

\draw[fill = black] (axis cs: 20.9,0) circle (1.5pt);
\node at (axis cs: 20.2,-0.5) {$\tau_2^1$};

\draw[fill = black] (axis cs: 23.34,0) circle (1.5pt);
\node at (axis cs: 24.1,-0.5) {$\tau_2^2$};

\draw[fill = black] (axis cs: 27.41,0) circle (1.5pt);
\node at (axis cs: 26.8,-0.5) {$\tau_2^3$};

%
%
%
%

\end{axis}
\end{tikzpicture}
\begin{tikzpicture}[scale=1]
\begin{axis}[legend style={at={(0,1)},anchor=north west}, compat=1.3,
  xmin=-3, xmax=5.5,ymin=-5.5,ymax=5.5,
  xlabel= {Velocity $\dot x(t)$},
  ylabel= {Force $b(x(t),\dot x(t),t)$}]
 \node[below right] at (current axis.north west)  {\chm{\textbf{C}}};
\addplot[dashdotted,color=black,mark=none,mark size=1.5pt] table [x index=2, y index=4]{CASE-RHO-025/case1rho025s3.txt}; 

\addplot[color=black,mark=none,mark size=1.5pt] table [x index=2, y index=4]{CASE-RHO-025/case1rho025d3-phaseplan.txt}; 

\draw[fill = black] (axis cs: 0,0.0999101) circle (1.5pt);
\node at (axis cs: 0.3,0.0999101) {$\tau_2$};

\draw[fill = black] (axis cs: 0,1.2) circle (1.5pt);
\node at (axis cs: -0.3,1.6) {$\tau_{2+\frac{1}{2}}$};

\draw[fill = black] (axis cs: 0,1.2) circle (1.5pt);
\node at (axis cs: -0.3,2.3) {$\tau_2^2$};

\draw[fill = black] (axis cs: 0,-2.87106) circle (1.5pt);
\node at (axis cs: 0.3,-2.5) {$\tau_2^1$};
  


\end{axis}
\end{tikzpicture}

\caption{Simulation of trajectories. Typical trajectory of a solution $x(t)$ where the temperature function $T$ is given by $T(t) = \cos(\omega t) + \rho v(t)$ where $v$ is an Ornstein Uhlenbeck process, $\rho = 0.25$. The $\tau_\bullet$ and $\tau_\bullet^\bullet$ pinned on the trajectory refer to the times defined in section \ref{sec:theory}. 
		}
	\label{fig:temporal_and_phase_plan}
\end{figure}

\newpage

\section{Experimental campaign}
\label{sec:experimental}
In general, mechanical properties of real world structures are not known but in some cases it is possible to infer them from observations (experimental data). Here, we focus on the behavior of such a structure (bridge component) subjected to changes of temperatures over time, see Figure~\ref{fig:bridge}. Engineers are interested in inferring the initial condition of displacement and other properties of the structure such as the stiffness, the magnitude of the static and dynamic friction forces and the dilatation with respect to the temperature.
In terms of our model, it corresponds to adjusting the parameters $z_0,K,f_s,f_d$ and $\beta$ to fit a set of observations.  
\begin{figure}[h!]
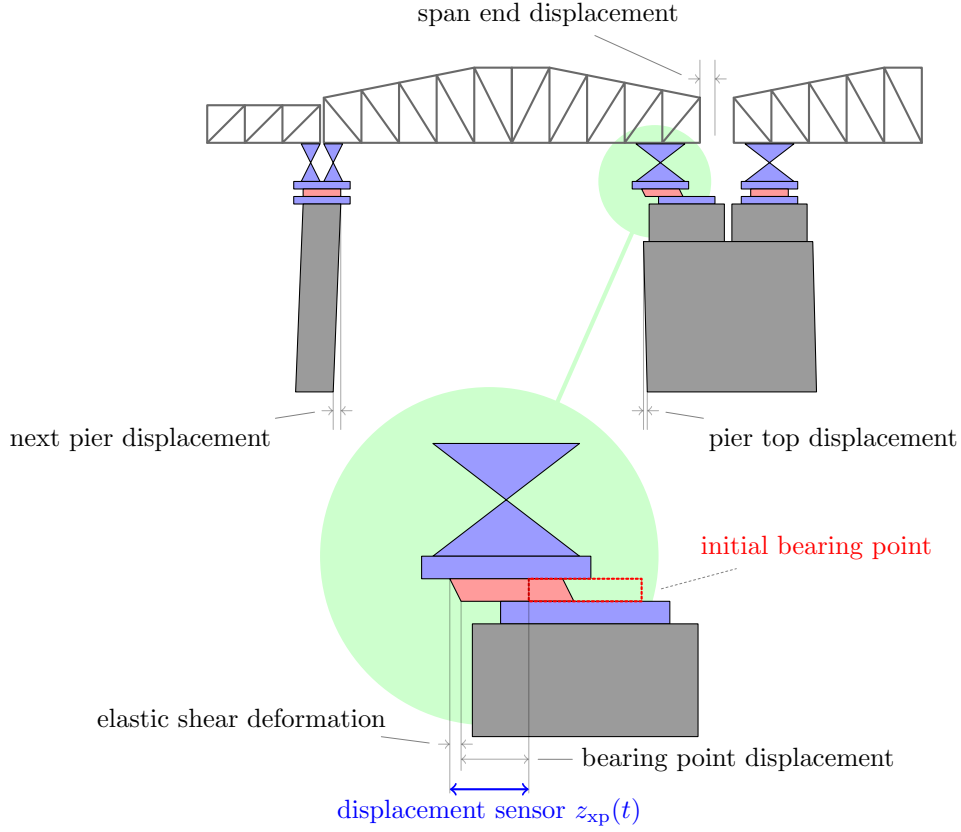

	\centering
	\includestandalone{figures/bridge}
	\caption{The structure consists of a pier (massive solid block) that is located between a bridge (horizontal structure) and its base. The pier is driven by a thermally induced displacement forcing and experiences dry friction (due to bridge/pier interaction). This mechanical structure is located at the parisian metro station Quai de la Gare (courtesy of RATP, the main public transportation in Paris)}
	\label{fig:bridge}
\end{figure}
\subsection{Presentation of the experimental data}
The model described above has been used on a practical case study on the Paris Metro Line 6 in 2016.
The aim of the operation was to estimate the real friction coefficients $f_s$ and $f_d$ of a single bearing point of the metro viaduct, near to the station Quai de la Gare in the East of Paris. The viaduct is an isostatic steel truss built in 1909. The bearing points are supposed to be fixed at one end and free at the other end of each span, but actually a significant friction appears on the free bearing points. OSMOS Group performed the continuous monitoring of the displacement of the span end on one of the free bearing points during one full year in 2015 and 2016. The measurements were taken 6 times every hour and additional records with 50 points every second were also available under the effects of the live loads. The bridge is instrumented with multiple sensors which measure 
(a) $T_\text{xp}$ the temperature in Celsius degree 
and (b) $z_\text{xp}$ the displacement sensor in millimeters.
The period of data assimilation covers about $\sim 300$ days (August 2015 - May 2016).
See Figure~\ref{fig:res-temporal}.
It is important to emphasize that the output of the displacement sensor $z_\text{xp}(t)$ is actually the sum of the bearing point displacement and the elastic shear deformation, as shown in Figure~\ref{fig:bridge}. The bearing point has a linear behavior, and its stiffness is known $K_\text{BP} = \SI{2.0e6}{\newton\per\meter}$. 
For the sake of consistency with the experimental data, in our numerical results we also use a corrected variable $z(t)$ that takes the elastic shear deformation into account $z(t) = x(t) + F(t) / K_\text{BP}$.

\subsection{Calibration of the model to interpret the experimental data}
\label{sec:calibration}

The experimental data, discussed in the previous section, showed that the length of a dynamic phase is smaller than the acquisition time-step.
Moreover, since this time-step is sufficiently small, the temperature variation between two time steps is also relatively small.
Therefore, we consider that the changes of temperature during dynamic phases are not significant. 
Physically, that justifies the use of the quasi-static approximation in the model for interpreting the data. 
Now, the goal is to find a set of parameters for the model that gives a good fit between the numerical and the experimental results.
The parameters considered are: a constant displacement offset $z_0$, the stiffness\footnotemark $K$, the dilatation coefficient $\beta$ and the two friction coefficients $f_s$ and $f_d$.
It is a difficult task to calibrate the model but on can use a least-square method and minimize the function:
\begin{equation}
	\textup{err}(z_0, K, \beta, f, f_0) = \int \left(z_0 +  z^{K, \beta, f, f_0}(T_\text{xp}(s)) - z_\text{xp}(s) \right)^2 ds
\end{equation} 
where the time integration is done over the data assimilation period ($\sim 300$ days), $T_\text{xp}(s)$ and $z_\text{xp}(s)$ are the experimental data for the temperature and the displacement respectively, and $z^{K, \beta, f, f_0}(T_\text{xp}(s))$ is the displacement given by the model detailed in section~\ref{sec:theory} with the quasistatic approximation, with $K, \beta, f_d, f_s$ as parameters and with $T_\text{xp}(s)$ as temperature input.
We use the genetic algorithm of the MATLAB optimization toolbox~\cite{matlabOTB} to look for the minimum of the function $\textup{err}$.
We have performed 10 tests with different arbitrary initial sets of parameters, we obtain the values presented in Table~\ref{tab:results}.
\begin{table}[h]
	\centering
	\pgfplotstabletypeset[
		col sep=comma,
		columns/run/.style={
			column name={run \#},
        	string type,
        },
		columns/x0/.style={
			column name={$z_0$ [\si{\meter}]},
        },
		columns/K/.style={
			column name={$K$ [\si{\newton\per\meter}]},
        },
		columns/f/.style={
			column name={$f_d$ [\si{\newton}]},
        },
		columns/f0/.style={
			column name={$f_s$ [\si{\newton}]},
        },
		columns/beta/.style={
			column name={$\beta$ [\si{\meter\per\kelvin}]},
        },
	    every head row/.style={before row=\toprule,after row=\midrule},
	    every last row/.style={before row=\midrule, after row=\bottomrule},
	    every row no 10/.style={before row=\midrule},
    ]{figures/parameters.csv}
	\caption{Results for 10 optimisation procedures with different random initial sets of values.}
	\label{tab:results}
\end{table} 
\subsection{Comparison between the experimental observations and our calibrated model}
Using the parameters shown in the first row of Table~\ref{tab:results}, we have obtained numerical results of our calibrated model in response to the observed thermal forcing.
These results are compared to the experimental observations in Figure~\ref{fig:res-temporal} and Figure~\ref{fig:res-phase}.
Figure~\ref{fig:res-temporal} presents the results over time; the main plot displays them over a 9 months period and the two other plots display a zoomed in result over a 2 month period.
Each of those plots displays on the top part both the measured and computed displacements, and on the bottom part the measured temperature.
\begin{figure}[h]
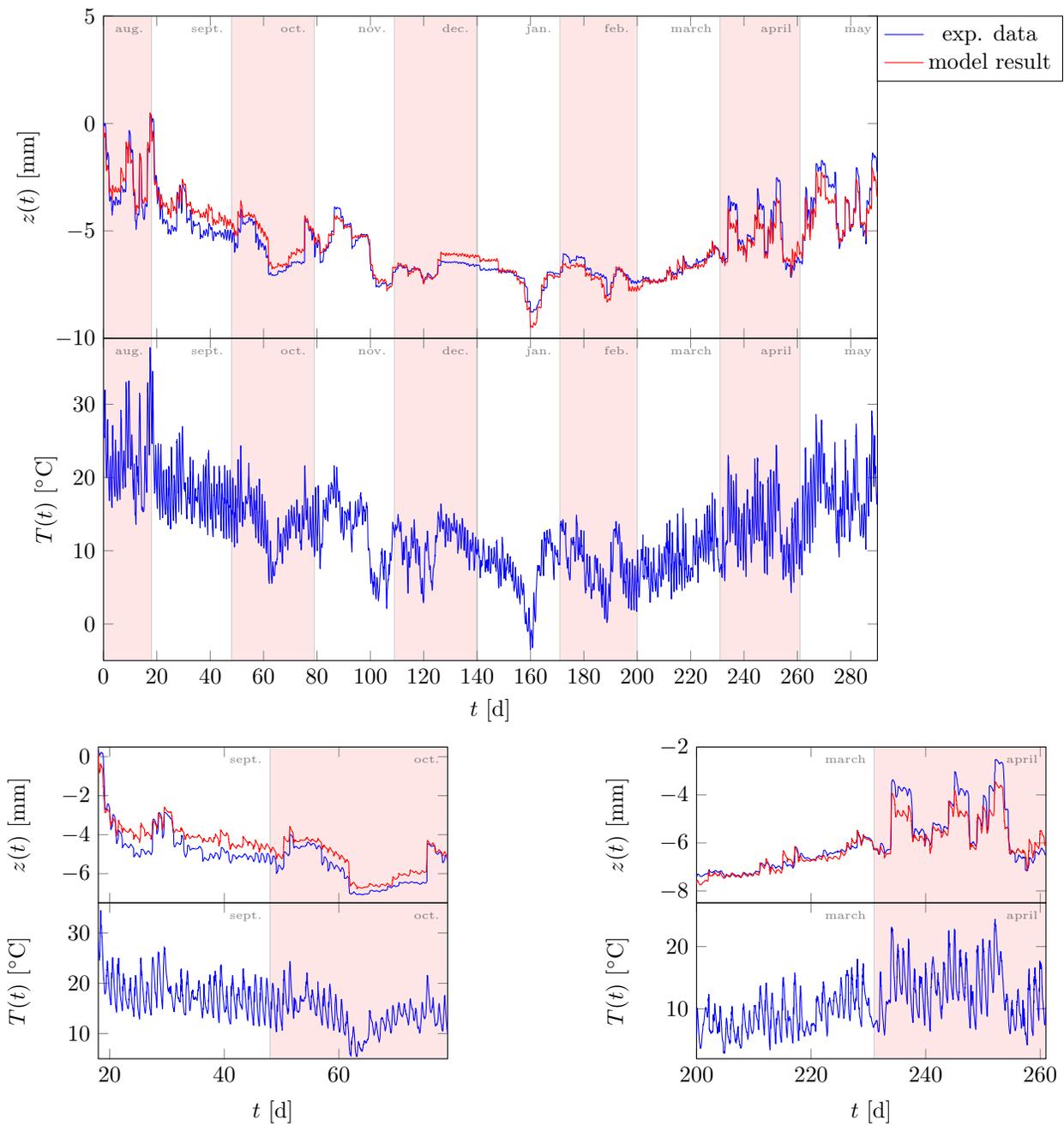

	\centering
	\includestandalone{figures/temporal}
	
	\includestandalone{figures/temporal_z1}
	\hfill
	\includestandalone{figures/temporal_z2}
	\caption{
		Experimental and numerical results according to time.
		Main : 1 data point per hour.
		Zooms : 1 data point per 10 minutes.
	}
	\label{fig:res-temporal}
\end{figure}
Figure~\ref{fig:res-phase} displays both the experimental and numerical results in the \emph{displacement versus temperature} phase plan.
The loops predicted by the model have a similar width and height than those obtained with the experimental data.
A good agreement between experimental and numerical data is shown on these figures. That supports that the model is quite accurate.
This practical case study enables to validate the friction model and to identify the friction coefficients of the bearing point. 
\begin{figure}[h]
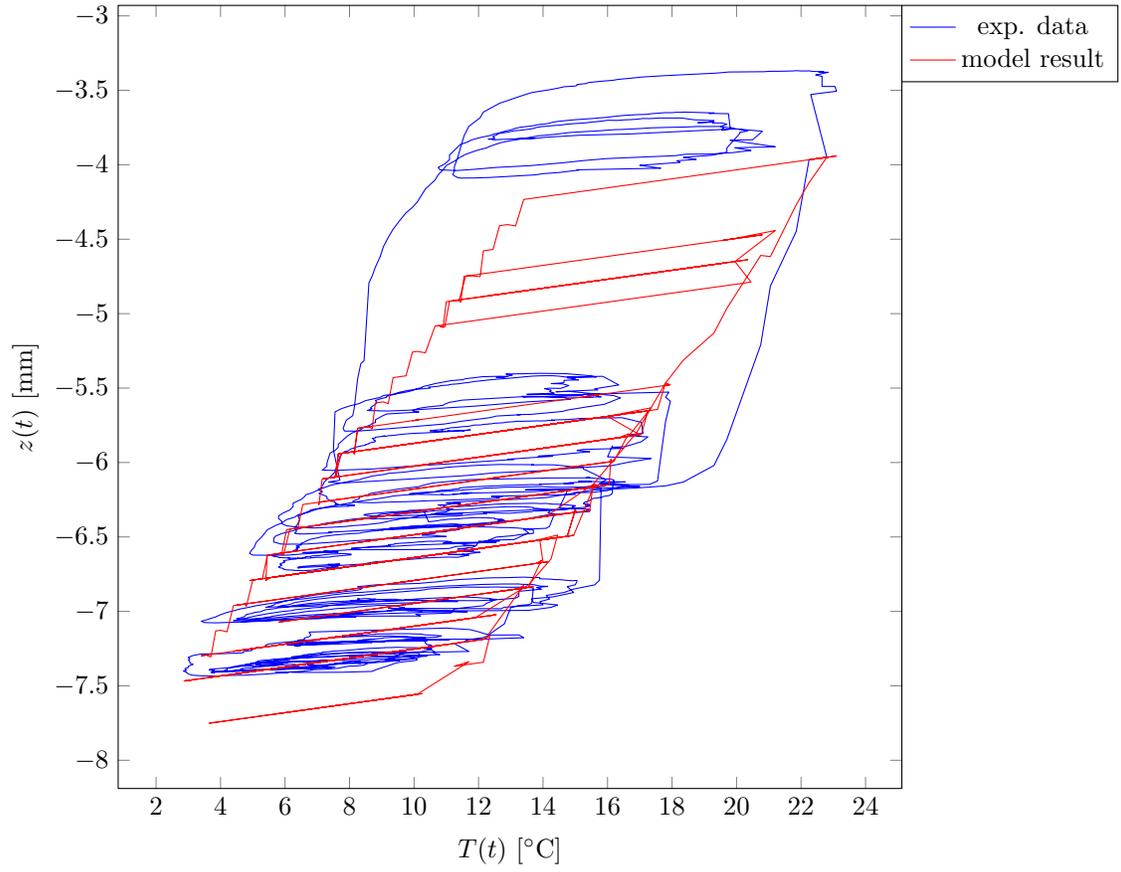

	\centering
	\includestandalone{figures/phase}

	\caption{
		Experimental and numerical results in the \emph{displacement versus temperature} phase plan.
		For the convenience of the reader, we only display the data for March and April. 
		1 data point per 10 minutes
	}
	\label{fig:res-phase}
\end{figure}

\section{Acknowledgement}
Alain Bensoussan research is supported by the National Science Foundation under grants DMS-1612880, DMS-1905449 and grant from the SAR Hong Kong RGC GRF 11303316. 
Laurent Mertz is supported by the National Natural Science Foundation of
China, Young Scientist Program \#11601335.
The authors wish to thank RATP Infrastructures for its support in the publication of the results on the Paris Metro Line 6 case.


\begin{thebibliography}{9}
\bibitem{BSL2000}
Bastien, J., Schatzman, M., Lamarque, C., 
\textit{Study of some rheological models with a finite
number of degrees of freedom.} Eur. J. Mech./A Solids 19(2), 277-307 (2000)

\bibitem{MR0348562} Br\'ezis, H. \textit{Op\'erateurs maximaux monotones et semi-groupes de contractions dans les espaces de Hilbert.} (French) North-Holland Mathematics Studies, No. 5. Notas de Matem\'atica (50). North-Holland Publishing Co., Amsterdam-London; American Elsevier Publishing Co., Inc., New York, 1973. vi+183 pp.

%
\bibitem{MITVIDEO1}  
https://ocw.mit.edu/courses/physics/8-01sc-classical-mechanics-fall-2016/week-2-newtons-laws/dd.1.1-friction-at-the-nanoscale/
%
\bibitem{MR0852197}
Shaw, S. W.  On the dynamic response of a system with dry friction.
J. Sound Vibration  108  (1986),  no. 2, 305--325.
%
\bibitem{DENHAR1930}
Den Hartog, J.P., \textit{Forced vibrations with combined Coulomb and viscous damping},
Transactions of the American Society of Mechanical Engineers (1930) 53, 107--115.
%
\bibitem{PFD1985}
C. Pierre, A. Ferri and E. Dowell, 
\textit{Multi--harmonic analysis of dry friction damped systems using an incremental harmonic balance method},
Transactions of the American Society of Mechanical Engineers (1985) 52 (4), 958-964.
%
\bibitem{YEH1966}
G. C. K. Yeh, \textit{Forced Vibrations of a Two-Degree-of-Freedom System with Combined Coulomb and Viscous Damping},
The Journal of the Acoustical Society of America 39, 14 (1966).
%
\bibitem{LCW1982}
S.L. Lau, Y. K. Cheung and S.Y. Wu,
\textit{A variable parameter incremental method for dynamic instability of linear and nonlinear elastic systems},
ASME Journal of applied mechanics, Vol. 49, Dec. 1982, pp 849--853.
%
\bibitem{LCW1983}
S.L. Lau, Y. K. Cheung and S.Y. Wu,
\textit{Incremental harmonic balance method with multiple time scales for aperiodic vibration of nonlinear systems},
ASME Journal of applied mechanics, Vol. 50, Dec. 1983, pp 871--876.
%
\bibitem{POP2010}   
VL Popov, Contact mechanics and friction. Physical principles and applications
Springer Berlin Heidelberg.
%
\bibitem{matlabOTB}
MATLAB Optimization Toolbox,
The MathWorks, Natick, MA, USA.

\end{thebibliography}
\end{document}